\NeedsTeXFormat{LaTeX2e}

\documentclass{amsart}
\usepackage[nobysame]{amsrefs}
\usepackage{amsfonts}
\usepackage{graphicx}
\usepackage{amsmath}
\usepackage{amssymb}
\usepackage{latexsym}
\usepackage{url}
\usepackage{verbatim}

\newtheorem{theorem}{Theorem}[section] 
\newtheorem{lemma}[theorem]{Lemma}     
\newtheorem{corollary}[theorem]{Corollary}
\newtheorem{proposition}[theorem]{Proposition}

\newcommand{\newnumbered}[2]{\newtheorem{#1}{#2}} 

\newnumbered{assertion}{Assertion}    
\newnumbered{conjecture}{Conjecture}  
\newnumbered{definition}{Definition}
\newnumbered{hypothesis}{Hypothesis}
\newnumbered{remark}{Remark}
\newnumbered{note}{Note}
\newnumbered{observation}{Observation}
\newnumbered{problem}{Problem}
\newnumbered{question}{Question}
\newnumbered{algorithm}{Algorithm}
\newnumbered{example}{Example}

\newnumbered{heuristic}{Heuristic}
\newnumbered{prediction}{Prediction}


\newcommand{\set}[1]{\left\{#1\right\}}
\newcommand{\abs}[1]{\left\vert#1\right\vert}
\newcommand{\sigzero}{\tau}
\newcommand{\st}{\colon}
\newcommand{\ord}{{\mathop{\mathrm{ord}}\nolimits}}


\title[{Fixed points of the self-power map}]
{Statistics for fixed points of the self-power
  map} 

\author{Matthew Friedrichsen}
\address{ Madison, WI\\
 USA}
    \email{friedrichsenm@gmail.com}
\author{ Joshua Holden}
\address{ Department of Mathematics\\
 Rose-Hulman Institute of Technology\\
 5500 Wabash Avenue\\
 Terre Haute, IN 47803-3999 \\
 USA}
    \email{holden@rose-hulman.edu}


\subjclass[2010]{11Y99 (primary), 11-04, 11T71, 94A60, 11A07, 11D99
  (secondary)} 


\begin{document}
\maketitle

\begin{abstract}
  The map $x \mapsto x^x$ modulo $p$ is related to a variation of the
  ElGamal digital signature scheme in a similar way to the discrete
  exponentiation map, but it has received much less study.  We explore
  the number of fixed points of this map by a statistical analysis of experimental data.  In
  particular, the number of fixed points can in many cases be modeled
  by a binomial distribution.  We discuss the many cases where this
  has been successful, and also the cases where a good model may not
  yet have been found.

Keywords:  self-power map, exponential equation, ElGamal digital
signatures, fixed points, random map

\end{abstract}

\section{Introduction and Motivation}

The security of the ElGamal digital signature scheme against selective
forgery relies on the
difficulty of solving the congruence
$g^{H(m)} \equiv y^r r^s \pmod{p}$
for $r$ and $s$, given $m$, $g$, $y$, and $p$ but not knowing the
discrete logarithm of $y$ modulo $p$ to the base $g$.  (We assume
for the moment the security of the hash function $H(m)$.)  
Similarly,
the security of a certain variation of this scheme given in, e.g.,
\cite[Note~11.71]{handbook}, relies on the difficulty of solving
\begin{equation} \label{eq:variation}
g^{H(m)} \equiv y^s r^r \pmod{p} .
\end{equation}
It is generally expected that the best way to solve either of these
congruences is to calculate the discrete logarithm of $y$, but this is
not known to be true.  In particular, another possible option would be
to choose $s$ arbitrarily and solve the relevant equation for $r$.  In
the case of~\eqref{eq:variation}, this boils down to solving equations
of the form $x^x \equiv c \pmod{p}$.  We will refer to these equations
as ``self-power equations'', and we will call the map $x \mapsto x^x$
modulo $p$ the ``self-power map''.  This map has been studied in
various forms in \cite{anghel, balog_et_al, crocker66, crocker69,
  somer, holden02, holden02a, holden_moree, REU2010, holden_robinson,
  kurlberg_et_al}.  In this work we will investigate experimentally
the number of fixed points of the map, i.e., solutions to
\begin{equation} \label{eq:spfp}
x^x \equiv x \pmod{p}
\end{equation}
between $1$ and $p-1$.  In particular, we would like to know whether
the distribution across various primes behaves as we would expect if
the self-power map were a ``random map''.  We do this by creating a
model in which values of a map are assumed to occur uniformly randomly
except as forced by the structure of the self-power map.  We can then
predict the distribution of the number of fixed points of this random
map and compare it statistically to the actual self-power map.  If
there is ``nonrandom'' structure in the self-power map, it may be
possible to exploit that structure to break the signature scheme
mentioned above or others like it.

Some theoretical work has been done on bounding the possible number of
fixed points of the self-power map.  If we denote the number of
solutions to \eqref{eq:spfp} which fall between $1$ and $p-1$ by
$F(p)$, then we have:

\begin{theorem}[See Section~5 of~\cite{balog_et_al} and Section~1.1
  of~\cite{kurlberg_et_al}] \label{thm:antsx1}
$F(p) \leq p^{1/3+o(1)}$ as $p \to \infty$.
\end{theorem}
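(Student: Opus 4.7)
The plan is to rewrite the fixed-point equation in the form $x^{x-1} \equiv 1 \pmod{p}$ and stratify the solutions by their multiplicative order. Suppose $1 \leq x \leq p-1$ satisfies~\eqref{eq:spfp}, and let $d = \ord_p(x)$. From $x^{x-1} \equiv 1$ we see that $d \mid x-1$, and of course $d \mid p-1$. Hence $x$ lies in the intersection $H_d \cap \mathrm{AP}_d$, where $H_d$ is the unique subgroup of $\mathbb{F}_p^\ast$ of order $d$ (so $|H_d| = d$) and $\mathrm{AP}_d = \{1,1+d,1+2d,\ldots\} \cap [1,p-1]$ is an arithmetic progression of length $L(d) \leq (p-1)/d + 1$. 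Since each $x$ has a unique multiplicative order, summing over divisors of $p-1$ yields $F(p) \leq \sum_{d \mid p-1} |H_d \cap \mathrm{AP}_d|$.

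Next I would split the sum into three ranges. When $d \leq p^{1/3}$, the trivial bound $|H_d \cap \mathrm{AP}_d| \leq |H_d| = d$ gives at most $p^{1/3}$ per divisor. When $d \geq p^{2/3}$, the trivial bound $|H_d \cap \mathrm{AP}_d| \leq L(d)$ gives at most $p^{1/3} + 1$ per divisor. Since $p-1$ has $p^{o(1)}$ divisors, these two extreme ranges together contribute only $p^{1/3 + o(1)}$ to $F(p)$.

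For the middle range $p^{1/3} < d < p^{2/3}$, both $|H_d|$ and $L(d)$ can be as large as $p^{2/3}$, so the trivial bounds are too weak. Here I would invoke a nontrivial upper bound on the intersection of a multiplicative subgroup of $\mathbb{F}_p^\ast$ with an arithmetic progression (of the type established by Heath-Brown and Konyagin, Bourgain–Glibichuk–Konyagin, and extended by later authors), of the rough form $|H \cap \mathrm{AP}| \leq (|H|\,|\mathrm{AP}|)^{1/3 + o(1)}$ whenever $|H|\,|\mathrm{AP}| \leq p$. Applied with $|H_d| \cdot L(d) \leq p$, this gives $|H_d \cap \mathrm{AP}_d| \leq p^{1/3 + o(1)}$ for every middle-range divisor, and summing over the $p^{o(1)}$ divisors again produces $p^{1/3+o(1)}$.

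The main obstacle is this middle range: the small- and large-$d$ cases use only elementary counting inside a cyclic subgroup or an AP, but the middle range genuinely requires nontrivial sumset/energy estimates for intersections of subgroups with progressions. This is precisely where the exponent $1/3$ enters, and any quantitative improvement to the theorem would demand correspondingly sharper bounds for such intersections.
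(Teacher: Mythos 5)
The paper gives no proof of this theorem: it is imported verbatim from \cite{balog_et_al} and \cite{kurlberg_et_al}, so the only comparison available is with the argument in those sources. Your outline matches that argument in all essentials --- writing the fixed-point condition as $x^{x-1}\equiv 1\pmod{p}$ so that $\ord_p x \mid x-1$, stratifying by the order $d$, bounding the extreme ranges of $d$ trivially by $d$ and by $(p-1)/d+1$, absorbing the sum over $d$ into $p^{o(1)}$ via the divisor bound, and obtaining the exponent $1/3$ from a nontrivial estimate for the intersection of a multiplicative subgroup with an arithmetic progression in the middle range. The one place you should be careful is that middle-range lemma, which carries the entire weight of the theorem: as stated (``rough form $(|H|\,|\mathrm{AP}|)^{1/3+o(1)}$ whenever $|H|\,|\mathrm{AP}|\le p$'') it is a placeholder, and a complete write-up must cite a precise published bound --- multiply the progression $1+d\mathbb{Z}$ by $d^{-1}$ modulo $p$ to reduce to counting points of a coset of the order-$d$ subgroup in an interval of length about $p/d$, and then invoke the Heath-Brown--Konyagin / Bourgain--Glibichuk--Konyagin-type estimates used in Section~5 of \cite{balog_et_al}; the trivial bounds alone only yield $F(p)\le p^{1/2+o(1)}$. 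With that reference supplied, the proposal is a faithful reconstruction of the known proof rather than a new route.
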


As far as a lower bound, every $p$ has at least $x=1$ as a solution to
\eqref{eq:spfp}, and at least some primes have only this solution.
However, while~\cite{kurlberg_et_al} gives good reason to believe that
there are infinitely many such primes, they also prove that these primes
are fairly rare:

\begin{theorem}[Theorem~1 of~\cite{kurlberg_et_al}]  
  Let $\pi(N)$ be the number of primes less than or equal to $N$ as
  usual.  Let $\mathcal{A}(N)$ denote the set of primes less than or
  equal to $N$ such that $F(p)=1$.  Then
$$\#\mathcal{A}(N) \leq \frac{\pi(N)}{(\ln \ln \ln N)^{0.4232+o(1)}}$$
  as $N \to \infty$.
\end{theorem}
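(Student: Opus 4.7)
The plan is to show that if $F(p)=1$, then $p-1$ has an atypically restricted multiplicative structure, and then to estimate how many primes $p\leq N$ meet that restriction.

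First, I would rewrite the fixed point equation: since $x=1$ is always a solution, $F(p)\geq 2$ is equivalent to the existence of $x\in[2,p-1]$ with $\ord_p(x)\mid x-1$. The natural construction is to look at elements of small prime order. For each prime $\ell\mid p-1$, there are exactly $\ell-1$ elements of order $\ell$ in $(\mathbb{Z}/p\mathbb{Z})^\ast$, namely the roots in $\mathbb{F}_p$ of the cyclotomic polynomial $\Phi_\ell$, and each such element (lifted to an integer in $[1,p-1]$) produces a non-trivial fixed point exactly when $\ell \mid x-1$.

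Second, for each fixed small $\ell$, I would translate ``some element of order $\ell$ is $\equiv 1\pmod{\ell}$'' into the statement that the polynomial $\Phi_\ell(1+\ell Y)\in\mathbb{Z}[Y]$ has a root in $\mathbb{F}_p$. I would then apply the Chebotarev density theorem to the splitting field $K_\ell$ of this polynomial over $\mathbb{Q}$: among primes $p$ with $\ell\mid p-1$, a positive proportion lie above primes of $K_\ell$ that split enough to produce such a root, so that $F(p)\geq 2$ for those $p$. A separate but routine input is that $\Phi_\ell(1+\ell Y)$ has a Galois group large enough to guarantee the relevant splitting class is non-empty.

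Third, I would aggregate over small primes $\ell$. The fields $K_\ell$ for varying primes $\ell$ are essentially linearly disjoint over $\mathbb{Q}$, so the Chebotarev failure events are asymptotically independent, and the density of primes $p\leq N$ for which \emph{every} prime $\ell$ in a suitable range $\ell\leq Y(N)$ fails decays like $c^{\omega(p-1,Y)}$ for some fixed $c<1$, where $\omega(n,Y)$ counts distinct prime divisors of $n$ up to $Y$. Combined with a sieve bound on the number of primes $p\leq N$ whose shifted prime $p-1$ has unusually few prime divisors below $Y$, this yields the stated bound, with the constant $0.4232$ emerging from optimizing $Y$ against the density of exceptional $p-1$'s with $\omega(p-1,Y)$ small.

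The hard part will be making the independence between different $\ell$'s quantitative and then sharpening the sieve step to pin down the exponent $0.4232$. The Chebotarev step needs effective error terms uniform in $\ell$, and the sieve for $\omega(p-1,Y)\leq k$ must be pushed into a regime where $Y$ is roughly a power of $\log\log N$ so that the resulting threshold becomes triple-logarithmic rather than double-logarithmic; finding the exact optimum is what produces the specific constant in the exponent.
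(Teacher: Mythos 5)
First, note that the paper does not actually prove this statement: it is quoted verbatim as Theorem~1 of \cite{kurlberg_et_al}, so the only ``proof'' here is the citation, and your sketch has to be judged against the argument in that reference. Your overall shape --- show that $F(p)=1$ forces the simultaneous failure of many roughly independent events indexed by prime divisors of $p-1$, then count the primes for which all of them fail, with the triple logarithm coming from how few small prime factors $p-1$ can typically have --- is the right shape and is broadly what Kurlberg, Luca and Shparlinski do.

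However, there is a concrete error at the heart of your second step. You claim that ``some element of order $\ell$ lifts to an integer $\equiv 1 \pmod{\ell}$'' is equivalent to $\Phi_\ell(1+\ell Y)$ having a root in $\mathbb{F}_p$. Since $\ell$ is invertible modulo $p$, the map $y \mapsto 1+\ell y$ is a bijection of $\mathbb{F}_p$, so $\Phi_\ell(1+\ell Y)$ has a root in $\mathbb{F}_p$ exactly when $\Phi_\ell$ does, i.e., whenever $\ell \mid p-1$; the substitution carries no information. The actual condition concerns the integer representative of $x$ in $[1,p-1]$ reduced modulo $\ell$, which is a lifting condition rather than a splitting condition modulo $p$, and so cannot be detected by Chebotarev applied to any fixed polynomial. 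This is precisely why equidistribution of elements of a given order in residue classes must be handled by exponential sums (as in Lemma~\ref{cz-eq7} of this paper), and why \cite{kurlberg_et_al} work instead with the \emph{large} orders $(p-1)/q$ for small primes $q$: there the candidate fixed points $m(p-1)/q+1 \equiv (q-m)q^{-1} \pmod{p}$ are explicit fixed rational numbers, and the fixed-point condition becomes a $q$-th power-residue and order condition, which genuinely is a Chebotarev/Kummer-theoretic condition over $\mathbb{Q}(\zeta_q)$. Beyond this, your third step (quantitative independence across $\ell$, the sieve for primes $p$ with few small prime factors of $p-1$, and the optimization producing the exponent $0.4232$) is stated as an intention rather than an argument, and that is where essentially all of the work in the actual proof lies.
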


\section{Models and Experimental Results}

\subsection{Heuristics and Normality}

Theorem~\ref{thm:antsx1} gives us a range in which the number of fixed
points $F(p)$ can lie, but does not say anything about the
distribution of the values within that range.  As described above, our
goal is to create a random model for the self-power map much like was
done for the discrete exponential map in \cite{holden02, holden02a,
  holden_moree}.  Our first attempt assumed that $F(p)$ was normally
distributed around the predicted value $\sum_{d \mid
  p-1}\frac{\phi(d)}{d}$.  (The normality assumption had been
successfully used for the discrete exponential map in, e.g.,
\cite{cloutier_holden}, see also~\cite{sorenson}.  Furthermore, it
appeared to be justified by the Central Limit Theorem given the number
of primes we were intending to test.)

In order to calculate the variance of $F(p)$, we use the
following heuristic, which is related to those
in~\cite[Section~6]{holden_moree}, and can also be derived from the
assumptions in~\cite[Section~4.1]{kurlberg_et_al}.   




\begin{heuristic} \label{xtoxxheur}
    The map $x \mapsto x^{x} \bmod{p}$ is a random map 
     in the sense that
    for all $p$, if $x, y$ are chosen uniformly at random from
    $\set{1, \ldots, p-1}$ with $\ord_p x = d$, then 
$$\Pr[x^x \equiv y \pmod{p}] \approx 
\begin{cases} \frac{1}{d} & \text{if\ }  \ord_p y \mid d,\\
0 & \text{otherwise.}
\end{cases}$$
\end{heuristic}

As some justification, one can use the methods of
\cite[Cor.~6.2]{holden_robinson} to prove the following lemma.  This
shows that the heuristic holds exactly  over the
range $1 \leq x \leq (p-1)p$ rather than $1 \leq x \leq p-1$:

\begin{lemma} \label{xtoxxlemma} For all $p$, given fixed $d \mid (p-1)$ and
  fixed $y \in \set{1, \ldots, (p-1)p}$, $p \nmid y$, such that $\ord_p y
  \mid d$, then
$$      \#\set{x \in \set{1, \ldots, (p-1)p} \st p \nmid x,\  x^{x} \equiv
        y \pmod{p},\ \ord_p x = d }
      = \frac{p-1}{d} .$$
  \end{lemma}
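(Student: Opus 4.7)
The plan is to exploit the coprimality $\gcd(p, p-1) = 1$ via the Chinese Remainder Theorem. The range $\set{1, \ldots, (p-1)p}$ is a complete system of residues modulo $p(p-1)$, so each $x$ in that range with $p \nmid x$ corresponds bijectively to a pair $(a, b)$, where $a = x \bmod p \in \set{1, \ldots, p-1}$ and $b = x \bmod (p-1) \in \set{0, 1, \ldots, p-2}$. This CRT parameterization is the essential structural observation that reduces the problem to a tractable count.

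Next I would rewrite the condition $x^x \equiv y \pmod{p}$ purely in terms of $(a, b)$. Since $p \nmid x$, Fermat's little theorem gives $a^{p-1} \equiv 1 \pmod{p}$, so
$$x^x \equiv a^x \equiv a^b \pmod{p},$$
while $\ord_p x = \ord_p a$. Hence the count in question equals the number of valid pairs $(a, b)$ with $\ord_p a = d$ and $a^b \equiv y \pmod{p}$.

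The final step is a count inside $(\mathbb{Z}/p\mathbb{Z})^{*}$. For each fixed $a$ of order $d$, the cyclic subgroup $\langle a \rangle$ is the unique subgroup of order $d$ in $(\mathbb{Z}/p\mathbb{Z})^{*}$; the hypothesis $\ord_p y \mid d$ places $y$ into this subgroup, so $a^b \equiv y \pmod{p}$ has solutions. Because $a^b$ depends on $b$ only modulo $d$ and cycles uniformly through $\langle a \rangle$ as $b$ ranges over $\set{0, 1, \ldots, p-2}$, there are exactly $(p-1)/d$ valid values of $b$ for each such $a$, producing the asserted count.

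I do not anticipate a serious obstacle: the argument is a clean CRT decomposition followed by a count in a cyclic group, and the hypothesis on $y$ was chosen precisely to make the uniqueness-of-subgroup step go through. The one thing deserving care is verifying that $\ord_p y \mid d$ genuinely forces $y \in \langle a \rangle$ for every $a$ of order $d$; this uses cyclicity of $(\mathbb{Z}/p\mathbb{Z})^{*}$ and the resulting uniqueness of its subgroup of each divisor order.
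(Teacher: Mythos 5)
Your CRT set-up is exactly the right elementary route (the paper itself only points to the $p$-adic methods of Holden--Robinson, Cor.~6.2, so a self-contained argument like yours is welcome): the bijection $x \leftrightarrow (a,b)$ with $a = x \bmod p$, $b = x \bmod (p-1)$, the reduction $x^x \equiv a^b \pmod p$ via Fermat, and the observation that $\ord_p y \mid d$ forces $y \in \langle a\rangle$ for every $a$ of order $d$ are all correct. The problem is your last sentence of the counting step. You correctly show that \emph{for each fixed} $a$ of order $d$ there are exactly $(p-1)/d$ admissible values of $b$; but the set in the lemma ranges over \emph{all} $x$ with $\ord_p x = d$, i.e.\ over all $\phi(d)$ choices of $a$, so your argument actually yields a total of $\phi(d)\cdot\frac{p-1}{d}$, not $\frac{p-1}{d}$. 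These agree only when $\phi(d)=1$. You cannot get from your (correct) per-$a$ count to the asserted total without silently dropping the factor $\phi(d)$, and the write-up does exactly that.

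This is not a cosmetic slip: the discrepancy is real. Take $p=5$, $d=4$, $y=1$; then $x=8$ and $x=12$ both satisfy $\ord_5 x = 4$ and $x^x \equiv 1 \pmod 5$, so the count is $2 = \phi(4)\cdot\frac{4}{4}$ rather than $\frac{4}{4}=1$. What your computation shows is that the clean count of $\frac{p-1}{d}$ holds once the residue class of $x$ modulo $p$ is pinned down (e.g.\ with the extra condition $x \equiv y \pmod p$ and $\ord_p y = d$, which is how the lemma is actually invoked in Theorem~\ref{thm:antsx2} with $y=x$: summing $\frac{p-1}{d}$ over the $\phi(d)$ classes of each order $d$ and then over $d$ recovers $(p-1)\sum_{n\mid p-1}\phi(n)/n$). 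So either add that condition to what you are counting, or carry the factor $\phi(d)$ honestly through to the conclusion; as written, the final step does not follow from the preceding ones.
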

  
From there one can prove the following theorem:

\begin{theorem} \label{thm:antsx2}  Let $G(p)$ be 
the number of solutions to  \eqref{eq:spfp} with $1 \leq x
\leq (p-1)p$ and $p \nmid x$.   Then
\begin{equation*}
G(p)  = (p-1) \sum_{n \mid  p-1}\frac{\phi(n)}{n}
\end{equation*}
\end{theorem}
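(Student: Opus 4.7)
The plan is to partition the solutions to \eqref{eq:spfp} in $\{1, \ldots, (p-1)p\}$ with $p \nmid x$ by the order $d = \ord_p x$, which must divide $p-1$, and then to apply Lemma~\ref{xtoxxlemma} for each such $d$. For a solution $x$, writing $y = x \bmod p$, we have $\ord_p y = d$, $x \equiv y \pmod{p}$, and $x^x \equiv y \pmod{p}$; the lemma tells us how many $x$ of order $d$ map to a fixed $y$ under $x \mapsto x^x$, and the additional constraint $x \equiv y \pmod{p}$ is what carves out the fixed points among the preimages.

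To perform the count cleanly I would reparametrize via the Chinese Remainder Theorem: since $\gcd(p, p-1)=1$, writing $a = x \bmod p$ and $b = x \bmod (p-1)$ gives a bijection between $\{1, \ldots, (p-1)p\}$ and $\{0, \ldots, p-1\} \times \{0, \ldots, p-2\}$, with $p \nmid x$ corresponding to $a \neq 0$. By Fermat's Little Theorem, $x^x \equiv a^b \pmod{p}$, so the fixed-point condition $x^x \equiv x \pmod{p}$ becomes $a^{b-1} \equiv 1 \pmod{p}$, equivalently $\ord_p a \mid b-1$.

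For each $a \in \{1, \ldots, p-1\}$ of order $d$, exactly $(p-1)/d$ residues $b \in \{0, \ldots, p-2\}$ satisfy $d \mid b-1$, in perfect agreement with the count $(p-1)/d$ supplied by the lemma. Since there are $\phi(d)$ residues $a$ of order $d$, the number of fixed points with $\ord_p x = d$ is $\phi(d)(p-1)/d$. Summing over $d \mid p-1$ then yields
\begin{equation*}
G(p) = \sum_{d \mid p-1} \phi(d) \cdot \frac{p-1}{d} = (p-1)\sum_{d \mid p-1} \frac{\phi(d)}{d}.
\end{equation*}
The main conceptual step is translating the lemma's preimage counting for $x \mapsto x^x$ into the residue-based framework needed for fixed points; once the CRT coordinates are in hand, this reduces to elementary counting of arithmetic progressions modulo $d$, with no genuine obstacle remaining.
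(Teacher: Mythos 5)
Your proof is correct and follows essentially the same route as the paper: partition the solutions by $d = \ord_p x$, count $(p-1)/d$ fixed points in each of the $\phi(d)$ residue classes of order $d$, and sum over $d \mid p-1$, which is exactly the paper's one-line appeal to Lemma~\ref{xtoxxlemma} with $y=x$. The only difference is that you make the argument self-contained by carrying out the CRT/Fermat computation that the paper delegates to the cited lemma, which has the side benefit of making explicit the extra factor of $\phi(d)$ (the sum over residue classes $a$ of order $d$) that the paper's terse proof leaves implicit.
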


\begin{proof} Sum up the numbers in Lemma~\ref{xtoxxlemma} given $y=x$
  over the possible values of $d$.  (Note that this could also be
  proved using the method of~\cite[Theorem~1]{somer}.)
\end{proof}

As far as using Heuristic~\ref{xtoxxheur}, note that it implies that
the ``experiment'' of testing whether $x$ is a fixed point behaves as
a Bernoulli trial.  Let $F_d(p)$ be the number of solutions
to~\eqref{eq:spfp} with $1 \leq x \leq p-1$ and $\ord_p x = d$.
Assuming independence of the Bernoulli trials (which is
not completely accurate, as we shall see), $F_d(p)$ is 
distributed as a binomial random variable with $\phi(d) = \#\set{x \in
  \set{1, \ldots, p-1} \st \ord_p x = d}$ trials and success
probability ${1}/{d}$. This distribution has mean
${\phi(d)}/{d}$, as expected, and variance
${{\phi(d)(d-1)}/{d^2}}$. Summing over $d \mid p-1$ gives the
predicted mean and variance of $F(p)$.

We tested the hypothesis that $F(p)$ was normal with this mean and
variance by collecting data for 238 primes from 100,003 to
102,667 and 599 primes from 1,000,003 to 1,007,977.  The number of fixed
points for each prime was determined using C code originally written
by Cloutier \cite{cloutier_holden} and modified by Lindle
\cite{lindle}, Hoffman \cite{hoffman}, and
Friedrichsen-Larson-McDowell \cite{REU2010} Post-processing was done
using a Python script written by the first author.  The code was run
on servers maintained by the Rose-Hulman Computer Science-Software
Engineering and Mathematics Departments and took only a few minutes of
computational time.

Once the values of $F(p)$ were collected, they were normalized to a
$z$-statistic by subtracting the predicted mean and dividing by the
predicted standard deviation (square root of the variance).  The
$z$-statistics were grouped separately for the six-digit and
seven-digit primes and tested to see if they conformed to the expected
standard normal distribution.  As you can see in
Figures~\ref{fig:SPP6hist} and~\ref{fig:SPP7hist}, the distributions
appear to be roughly normal to the naked eye, and the standard
deviations are close to 1 as expected.  The means are closer to 0.5
than the expected 0, and there are a few bars which seem significantly
off, but these features could be attributed to certain known
properties which appear below in Theorem~\ref{thm:predorders}.  More
troubling is the lack of normality revealed by probability plots in
Figures~\ref{fig:SPP6prob} and~\ref{fig:SPP7prob}.  Perfectly normal
distributions would lie along the diagonal lines in these figures, and
Ryan-Joiner tests confirm that it is very unlikely that $F(p)$ is
obeying a normal distribution for these primes.  In fact there appear
to be more primes in the ``tails'' than expected, that is, a larger
than expected number of primes with significantly more or fewer fixed
points than expected.

\begin{figure}
  \centering
  \includegraphics[height=2in]{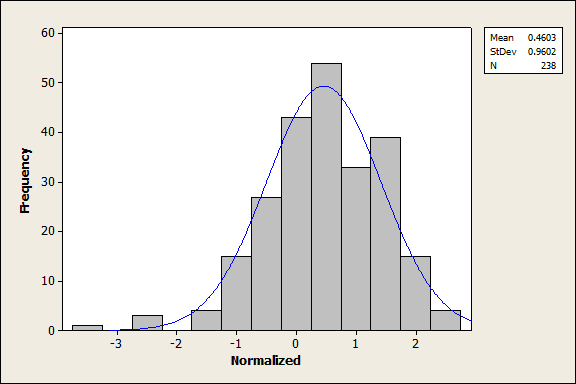}
  \caption{Histogram of $z$-statistics for six-digit primes}
  \label{fig:SPP6hist}
\end{figure}

\begin{figure}
  \centering
  \includegraphics[height=2in]{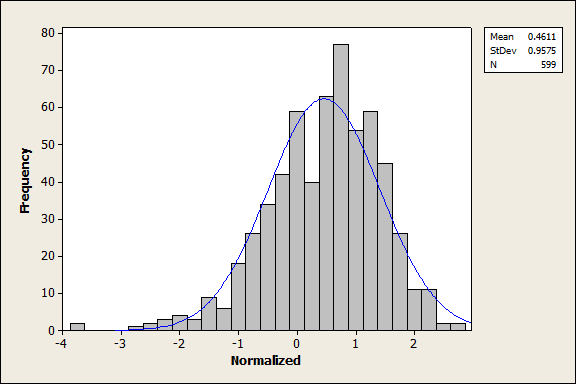}
  \caption{Histogram of $z$-statistics for seven-digit primes}
  \label{fig:SPP7hist}
\end{figure}

\begin{figure}
  \centering
  \includegraphics[height=2in]{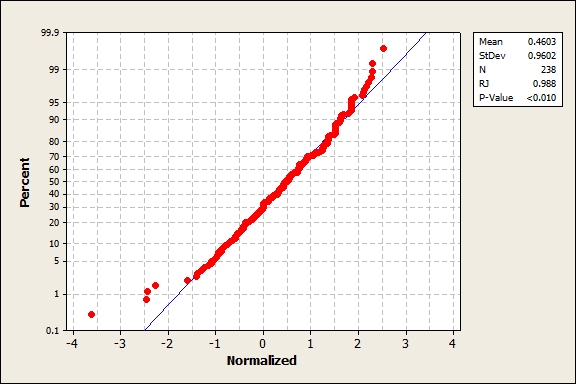}
  \caption{Probability Plot of $z$-statistics for six-digit primes}
  \label{fig:SPP6prob}
\end{figure}

\begin{figure}
  \centering
  \includegraphics[height=2in]{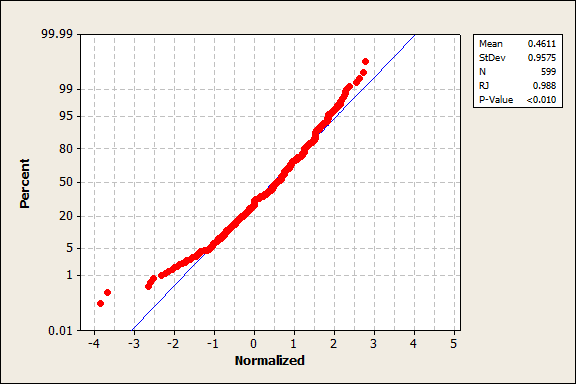}
  \caption{Probability Plot of $z$-statistics for seven-digit primes}
  \label{fig:SPP7prob}
\end{figure}

\subsection{Binomial Distribution and Goodness of Fit}

Some modification of the code by the first author allowed us to
collect the values of $F_d(p)$ for the same primes as above, in order
to see if particular orders were behaving less ``randomly'' than
others.  We excluded certain orders where $F_d(p)$ is  known to behave
predictably:

\begin{theorem} \label{thm:predorders} \mbox{}
\begin{enumerate}
\item $F_1(p)=1$ for all $p$. \label{F1}
\item $F_2(p)=0$ for all $p$.\label{F2}
\item $F_{p-1}(p)=0$ for all $p$. \label{Fp-1}
\item $F_{(p-1)/2}(p)=
\begin{cases}
0 & \text{if~} p \equiv 3 \text{~or~} 5 \pmod{8}, \text{or if~} p \equiv 1 \text{~or~} 7 \pmod{8} \text{~and~}
\ord_p 2  \neq  (p-1)/2; \\
1 & \text{if~} p \equiv 1 \text{~or~} 7 \pmod{8} \text{~and~}
\ord_p 2 = (p-1)/2.
\end{cases}$
\label{Fp-1/2}
\end{enumerate}
\end{theorem}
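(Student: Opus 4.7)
The common thread for all four parts is the observation that for $x \in \{1,\ldots,p-1\}$ we have $\gcd(x,p)=1$, so the fixed-point condition $x^x \equiv x \pmod{p}$ is equivalent to $x^{x-1} \equiv 1 \pmod{p}$, which in turn is equivalent to $\ord_p x \mid x-1$. The plan is to fix the order $d$ specified in each case, enumerate those $x \in \{1,\ldots,p-1\}$ for which $d \mid x-1$, and then check which of these candidates actually have order exactly $d$.

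Parts (\ref{F1}) and (\ref{F2}) follow immediately. The only element of order $1$ is $x=1$, and $1 \mid 0$, so $F_1(p)=1$. The only element of order $2$ is $x=p-1$, and since $p-1$ is even we have $2 \nmid p-2$, so $F_2(p)=0$. For part (\ref{Fp-1}), the condition $(p-1)\mid x-1$ together with $1 \leq x \leq p-1$ forces $x=1$, which has order $1 \ne p-1$, so no element of order $p-1$ can be a fixed point.

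For part (\ref{Fp-1/2}), I would set $d=(p-1)/2$. The condition $d \mid x-1$ with $1 \leq x \leq p-1$ admits only $x=1$ and $x=(p+1)/2$; the former has the wrong order. Since $2 \cdot (p+1)/2 \equiv 1 \pmod{p}$, we have $(p+1)/2 \equiv 2^{-1} \pmod{p}$, and hence $\ord_p((p+1)/2) = \ord_p 2$. Thus $F_{(p-1)/2}(p)$ equals $1$ if $\ord_p 2 = (p-1)/2$ and $0$ otherwise. I would then apply the second supplementary law of quadratic reciprocity: $\ord_p 2 \mid (p-1)/2$ iff $2$ is a quadratic residue modulo $p$ iff $p \equiv \pm 1 \pmod{8}$. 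For $p \equiv 3$ or $5 \pmod{8}$ this divisibility already fails, so automatically $\ord_p 2 \ne (p-1)/2$ and the count is $0$; for $p \equiv 1$ or $7 \pmod{8}$ the divisibility holds, and the count is $1$ exactly when the equality does, matching the stated formula.

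The argument is elementary throughout. The only step with any real content is invoking the supplementary law in part (\ref{Fp-1/2}), and I do not anticipate a genuine obstacle; the main care needed is simply to keep straight the distinction between ``$\ord_p 2$ divides $(p-1)/2$'' and ``$\ord_p 2$ equals $(p-1)/2$,'' since these give different cases in the final formula.
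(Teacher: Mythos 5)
Your proposal is correct and follows essentially the same route as the paper: the equivalence $x^x \equiv x \pmod{p} \iff \ord_p x \mid x-1$ is exactly the paper's Lemma~\ref{prop7} (quoted from Proposition~7 of~\cite{REU2010}), the enumeration of candidates $x \equiv 1 \pmod{d}$ is its Corollary~\ref{prop7cor}, and the identification $(p+1)/2 \equiv 2^{-1} \pmod{p}$ together with the second supplementary law is how the paper handles part~(\ref{Fp-1/2}). The only difference is that you prove these ingredients directly where the paper cites~\cite{REU2010}, which makes your argument self-contained but not materially different.
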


To prove this we use the following lemmas:

\begin{lemma}[Proposition~7 of~\cite{REU2010}] \label{prop7}
Let $p$ be prime.  The number $x$ is a solution to~\eqref{eq:spfp} if and
only if $x \equiv 1 \pmod{\ord_p x}$.
\end{lemma}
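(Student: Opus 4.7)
The plan is straightforward: rewrite the congruence $x^{x} \equiv x \pmod{p}$ in multiplicative form and apply the defining property of the multiplicative order. Since we are looking at solutions in the range $1 \leq x \leq p-1$, we automatically have $\gcd(x,p)=1$, so $x$ is invertible modulo $p$ and the order $d = \ord_p x$ is well defined.

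First I would multiply both sides of $x^{x} \equiv x \pmod{p}$ by $x^{-1}$ to obtain the equivalent congruence $x^{x-1} \equiv 1 \pmod{p}$. Next I would invoke the standard fact that for $a$ coprime to $p$, $a^k \equiv 1 \pmod p$ if and only if $\ord_p a$ divides $k$. Applying this with $a=x$ and $k=x-1$ yields $d \mid x-1$, which is exactly the statement $x \equiv 1 \pmod{d}$. Each step is reversible, so this gives the biconditional in both directions.

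There is essentially no substantive obstacle; the only things to verify are that $x=1$ is handled correctly (in that case $d=1$ and the congruence $x \equiv 1 \pmod{1}$ is trivially satisfied, matching the fact that $1^{1} \equiv 1$) and that the invertibility of $x$ is used only once, at the multiplication step. Since the proof is only a few lines, I would write it out in full rather than appealing to the reference, to keep the paper self-contained.
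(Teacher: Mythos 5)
Your proof is correct and is essentially the argument the paper relies on: the paper itself only cites Proposition~7 of the reference rather than proving the lemma, but it uses exactly your rewriting $x^{x-1} \equiv 1 \pmod{p}$ (valid since $1 \leq x \leq p-1$ makes $x$ invertible) in the later proposition on $(p+2)/3$, and your appeal to $a^k \equiv 1 \pmod{p} \iff \ord_p a \mid k$ completes the biconditional exactly as intended.
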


\begin{corollary} \label{prop7cor}
  Let $d \mid (p-1)$.  The solutions to~\eqref{eq:spfp} of order $d$
  are exactly the elements of $\mathcal{P} = \set{1, d+1, 2d+1,
    \ldots, p-d}$ which have order $d$.
\end{corollary}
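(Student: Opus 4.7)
The plan is to deduce the corollary directly from Lemma~\ref{prop7} by rewriting its congruence condition as membership in an explicit arithmetic progression. First I would fix $d \mid (p-1)$ and consider an arbitrary $x \in \set{1,\ldots,p-1}$ with $\ord_p x = d$. By Lemma~\ref{prop7}, such an $x$ solves~\eqref{eq:spfp} if and only if $x \equiv 1 \pmod{d}$, so the task reduces to identifying the elements of $\set{1,\ldots,p-1}$ that are congruent to $1$ modulo $d$.

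Next I would list that residue class in the given range. The elements $x \in \set{1,\ldots,p-1}$ with $x \equiv 1 \pmod{d}$ are exactly $1, d+1, 2d+1, \ldots$, and since $d \mid p-1$ we may write $p-1 = kd$ for some positive integer $k$; the largest such $x$ is then $(k-1)d + 1 = kd - d + 1 = p-d$. Hence this residue class, restricted to $\set{1,\ldots,p-1}$, is precisely $\mathcal{P} = \set{1, d+1, 2d+1, \ldots, p-d}$.

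Combining the two steps, the solutions to~\eqref{eq:spfp} having order exactly $d$ are the elements of $\mathcal{P}$ whose order is $d$, which is the claim. There is essentially no obstacle here; the only small bookkeeping step is verifying that the last term of the progression is $p-d$, which follows immediately from $p-1 = kd$. Thus the corollary is just a convenient restatement of Lemma~\ref{prop7} together with an explicit enumeration of an arithmetic progression, and its usefulness is that it reduces the search for fixed points of order $d$ to checking the orders of the $\phi((p-1)/d) \cdot$something elements of $\mathcal{P}$ (the exact count is not needed for the statement itself).
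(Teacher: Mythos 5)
Your proof is correct and matches the paper's (implicit) argument exactly: the corollary is stated without proof as an immediate consequence of Lemma~\ref{prop7}, obtained precisely by specializing the congruence $x \equiv 1 \pmod{\ord_p x}$ to order $d$ and enumerating the residue class $1 \bmod d$ inside $\set{1,\ldots,p-1}$. (Your closing aside about ``$\phi((p-1)/d)\cdot$something'' elements is off --- $\mathcal{P}$ has exactly $(p-1)/d$ elements --- but that remark is not part of the proof and does not affect its validity.)
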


\begin{proof}[Proof of Theorem~\ref{thm:predorders}]
  Parts~\ref{F1} and~\ref{F2} are clear from the definition.
  Part~\ref{Fp-1} is Proposition~6 of~\cite{REU2010}.  If $x$ is a
  fixed point such that $\ord_p x = (p-1)/2$, then
  Corollary~\ref{prop7cor} implies that $x=(p+1)/2$.  Then
  Proposition~2 of~\cite{REU2010} tells us $x$ is a fixed point if and
  only if $2$ is a quadratic residue modulo $p$, which is if and only
  if $p \equiv 1$ or $7 \pmod{8}$.  Combining this with the fact that
  $\ord_p (p+1)/2 = \ord_p 2$ gives Part~\ref{Fp-1/2}.
\end{proof}

\begin{remark}
Note that the behavior of fixed points in safe primes, where
$(p-1)/2$ is prime, is completely explained by Theorem~\ref{thm:predorders}.
\end{remark}

We collected values of $F_d(p)$ for each prime and each value of $d
\mid p-1$ other than $d=1,$ $2,$ $p-1,$ and $(p-1)/2$.  We then attempted
to normalize this data, but the resulting $z$-statistics turned out to be
too highly clustered and did not resemble normal data.  We therefore
decided to do a chi-squared goodness-of-fit test on the data.  We used
the formula for the mass function of a binomial distribution to
predict that 

\begin{prediction} \label{pred1}
$\Pr[F_d(p)= k] = \binom{\phi(d)}{k} \left(\frac{1}{d}\right)^k
\left(\frac{d-1}{d}\right)^{\phi(d)-k}$
\end{prediction}

We chose to use the categories $k=0$, $k=1$, $k=2$, and $k>2$ for our
test in order to make sure the categories with large $k$ did not get
too small.  We summed the predictions over $p$ and $d$ for each of the
categories and compared them with the observed numbers of $p$ and $d$
which fell into each category.  The resulting chi-squared statistic
was $4.66$, giving a $p$-value of $0.198$.  Using the common cutoff of
$p=0.05$ for statistical significance, we do not see statistical
evidence that our predictions are incorrect.

However, not all values of $p$ and $d$ fit the predictions equally
well.  We tested this by sorting in various ways the values of
$F_d(p)$ collected for $p$ between 100,003 and 102,667, and $d \mid
p-1$ other than $d=1,$ $2,$ $p-1,$ and $(p-1)/2$.  After each sort, we
calculated the chi-squared statistics and $p$-values for a sliding
window of 100 values, with predictions and observations calculated as
above.  (The size of the window was chosen in order to make sure there
were enough data points in the window for the chi-squared test to be
valid.) 

The strongest evidence of a pattern was seen when the data was
sorted by value of $d$, as can be seen in
Figure~\ref{fig:chisq6-sliding}.  For data randomly generated
according to the relevant binomial distributions, $p$-values should be
evenly distributed between $0$ and $1$.  When $p$-values are biased
towards 0 it indicates statistically significant divergence from the
predicted distributions.  In other words, dots on the same
(approximate) horizontal line should be evenly distributed between the
left- and right-hand sides of the graph.  (Note that the value of $d$
used to place the dot on the plot is the largest value of $d$ in the
window of 100 pairs, so some dots would more accurately ``belong'' to
more than one line.)  Horizontal lines where the dots are clustered
towards the left-hand side indicate statistically significant
divergence.

\begin{figure}
  \centering
  \includegraphics[height=3in]{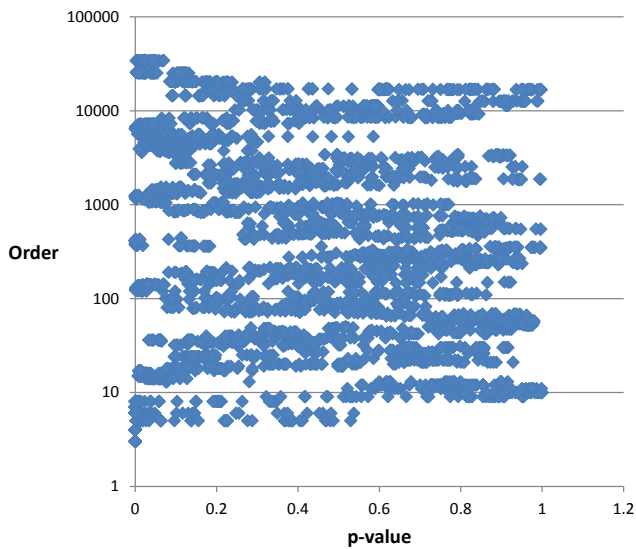}
  \caption{Logarithmic plot showing $p$-values of the sliding window
    goodness-of-fit test, data sorted by order, for six-digit primes}
  \label{fig:chisq6-sliding}
\end{figure}

As you can see, the strongest divergence from the predictions occurs
with particularly small and particularly large values of $d$.  (Since
the value of $d$ used to place the dot on the plot is the largest
value in the window, the effect for small $d$ is even larger
than it appears in the plot.)  We therefore looked for theoretical
explanations of these effects.

\section{Small and Large Orders}

\subsection{Small Orders}

For $d=3$ we observed that while $F_3(p)=2$ should occur roughly
one-ninth of the time according to Prediction~\ref{pred1}, it never
occurred at all in our data.  A similar but less striking effect was
observed for $d=4$, while for $d=6$ it was $F_6(p)=1$ which was never
observed, despite Prediction~\ref{pred1} saying it should happen over
one-quarter of the time.  It turns out that there is a significant
lack of independence in the fixed points for these orders, as we were
able to show.

\begin{theorem}
\begin{enumerate}
\item $F_3(p)=0$ or $F_3(p)=1$ for all $p$ such that $3 \mid (p-1)$. \label{F3}
\item $F_4(p)=0$ or $F_4(p)=1$ for all $p$ such that $4 \mid (p-1)$. \label{F4}
\item $F_6(p)=0$ or $F_6(p)=2$ for all $p$ such that $6 \mid (p-1)$. \label{F6}
\end{enumerate}
\end{theorem}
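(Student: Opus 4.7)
The plan is to combine Corollary~\ref{prop7cor} with the observation that $\phi(d)=2$ for each $d\in\set{3,4,6}$: there are only two candidates for fixed points of order $d$ in $\set{1,\dots,p-1}$, namely the two primitive $d$-th roots of unity $x_1,x_2$ modulo $p$, so the whole problem reduces to studying when each of $x_1$ and $x_2$ is congruent to $1$ modulo $d$.

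First I would identify $x_1$ and $x_2$ as the two roots in $\mathbb{F}_p$ of the cyclotomic polynomial $\Phi_d(X)$, and read off their sum from the linear coefficient via Vieta. For $d=3$, $\Phi_3=X^2+X+1$ gives $x_1+x_2\equiv -1\pmod p$; for $d=4$, $\Phi_4=X^2+1$ gives $x_1+x_2\equiv 0\pmod p$; for $d=6$, $\Phi_6=X^2-X+1$ gives $x_1+x_2\equiv 1\pmod p$. Since $x_1,x_2\in\set{1,\dots,p-1}$ are distinct (a primitive $d$-th root of unity with $d\geq 3$ is never its own inverse), these congruences become the honest equalities $x_1+x_2=p-1$, $x_1+x_2=p$, and $x_1+x_2=p+1$, respectively.

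By Corollary~\ref{prop7cor}, $x_i$ is a fixed point of order $d$ if and only if $x_i\equiv 1\pmod d$. I would then reduce the three sum relations modulo $d$, using the hypothesis $d\mid p-1$. For $d=3$ the sum is $\equiv 0\pmod 3$, while two fixed points would force the sum to be $\equiv 1+1\equiv 2\pmod 3$, so $F_3(p)\leq 1$. For $d=4$ the sum is $p\equiv 1\pmod 4$, while $1+1\equiv 2\pmod 4$, so $F_4(p)\leq 1$. For $d=6$ the sum is $p+1\equiv 2\pmod 6$, which forces $x_1\equiv 1\pmod 6$ if and only if $x_2\equiv 2-1\equiv 1\pmod 6$; fixed points of order $6$ therefore come in pairs, so $F_6(p)\in\set{0,2}$.

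The main obstacle is largely bookkeeping: verifying distinctness of $x_1$ and $x_2$ (which is immediate for $d\geq 3$, since then the two primitive $d$-th roots $g^{(p-1)/d}$ and $g^{-(p-1)/d}$, for a primitive root $g$ modulo $p$, differ) and checking that the residue class of $x_1+x_2$ modulo $p$ pins down its integer value (which holds because the sum lies in $\set{2,\dots,2p-3}$ and in each case exactly one integer in this range represents the required residue).
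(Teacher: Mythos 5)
Your proposal is correct and takes essentially the same route as the paper: both arguments pin down the exact integer sum of the two elements of order $d$ (namely $p-1$, $p$, and $p+1$ for $d=3,4,6$), reduce that sum modulo $d$, and compare against the requirement from Corollary~\ref{prop7cor} that a fixed point of order $d$ be $\equiv 1 \pmod{d}$. The only cosmetic difference is that you derive the sums via Vieta's formulas on the cyclotomic polynomial $\Phi_d$, where the paper appeals to a ``direct computation.''
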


\begin{proof} 
  If $3 \mid (p-1)$, then by Lemma~\ref{prop7} the fixed points of
  order 3 are exactly the elements congruent to 1 modulo 3.  In this
  case there are two elements of order 3, and a direct computation
  shows that if $x$ is one of them, then $p-1-x$ is the other.  Thus
  the elements of order 3 add up to $p-1 \equiv 0 \pmod{3}$. So at
  most one of the elements of order 3 can be a fixed point, proving
  Part~\ref{F3}.  Part~\ref{F4} is similar except that the elements of
  order 4 add up to $p \equiv 1 \pmod{4}$.  In Part~\ref{F6} the
  elements of order 6 add up to $p+1 \equiv 2 \pmod{6}$ so if one is a
  fixed point then the other must be also.
\end{proof}

The following lemma says that the elements of a given order $f$ are
approximately uniformly distributed across the residue classes modulo
any given $r$.

\begin{lemma} \label{cz-eq7}
  Let $a$, $r$, and $f$ be positive integers such that $0 \leq a < r
  \leq p-1$  and \mbox{$f \mid (p-1)$}.  Let $\mathcal{Q}
  = \set{a, r+a, 2r+a, \ldots, p-1-r+a}$. Let $\mathcal{Q}' = \set{ x \in
    \mathcal{Q} \st \ord_{p}(x)=f}$.  Then
\[
\abs{\#\mathcal{Q}' - \frac{\phi(f)}{r}} \leq
\sigzero(p-1)\sqrt{p}(1+\ln p)
\]
where $\sigzero(p-1)$ is the number of divisors of $p-1$.
\end{lemma}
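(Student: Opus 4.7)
The plan is to express $\#\mathcal{Q}'$ via Möbius inversion over the divisors of $f$, detect each ``$d$-th-root-of-unity'' condition using multiplicative characters modulo $p$, and then apply the Pólya--Vinogradov inequality to the resulting incomplete character sums over the arithmetic progression $\mathcal{Q}$. Writing $H_d = \set{x \in (\mathbb{Z}/p\mathbb{Z})^{\times} \st x^d \equiv 1 \pmod{p}}$, I would start from the identity
$$\mathbf{1}_{\ord_p(x) = f} = \sum_{d \mid f} \mu(f/d)\, \mathbf{1}_{x \in H_d},$$
valid since $|H_d| = d$ whenever $d \mid (p-1)$. By orthogonality in the quotient group $(\mathbb{Z}/p\mathbb{Z})^{\times}/H_d$ of order $(p-1)/d$, the $(p-1)/d$ characters $\chi$ trivial on $H_d$ satisfy $\sum_{\chi \text{ triv on } H_d} \chi(x) = \frac{p-1}{d}\mathbf{1}_{x \in H_d}$. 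Substituting and interchanging the order of summation yields
$$\#\mathcal{Q}' = \sum_{d \mid f} \frac{d\,\mu(f/d)}{p-1} \sum_{\chi \text{ triv on } H_d} \sum_{x \in \mathcal{Q}} \chi(x).$$

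The principal-character term contributes the main term. Since $\chi_0(x)$ equals $1$ for $x \in \set{1,\ldots,p-1}$ and $0$ otherwise, one has $\sum_{x \in \mathcal{Q}} \chi_0(x) = \#\mathcal{Q} - \mathbf{1}_{a=0} = (p-1)/r + O(1)$. Combined with the classical identity $\sum_{d \mid f} d\,\mu(f/d) = \phi(f)$, this contributes $\phi(f)/r + O(\tau(f)/r)$ to $\#\mathcal{Q}'$.

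For each non-principal $\chi$ trivial on $H_d$, the inner sum is $\sum_{j=0}^{M-1} \chi(a + jr)$ with $M = \#\mathcal{Q}$. Since $\gcd(r,p) = 1$, this equals $\chi(r)\sum_{j=0}^{M-1} \chi(r^{-1}a + j)$, which is (up to a unit factor) an incomplete character sum of $\chi$ over $M$ consecutive integers starting at $r^{-1}a \bmod p$. The Pólya--Vinogradov inequality then bounds it by $\sqrt{p}\,\ln p$. Since there are $(p-1)/d - 1$ non-principal characters trivial on each $H_d$, the total non-principal contribution is at most
$$\sum_{d \mid f} \frac{d}{p-1}\left(\frac{p-1}{d} - 1\right)\sqrt{p}\,\ln p \;\leq\; \tau(f)\,\sqrt{p}\,\ln p \;\leq\; \sigzero(p-1)\,\sqrt{p}\,(1 + \ln p),$$
with the extra ``$+1$'' absorbing the $O(\tau(f))$ principal-term error.

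The main thing to be careful about is the bookkeeping of the small constant errors (the exact size of $\#\mathcal{Q}$, the possible excluded point $x = 0$ when $a = 0$, and the exact constant in Pólya--Vinogradov). No single step is a genuine obstacle: once the character decomposition is in place, Pólya--Vinogradov delivers the required $\sqrt{p}\,\ln p$, and the divisor sum supplies the $\sigzero(p-1)$ factor.
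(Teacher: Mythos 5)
Your proposal is correct and follows essentially the same route as the paper, whose ``proof'' is simply a pointer to the character-sum argument for Equation~(7) of~\cite{cobeli-zaharescu}: detect the condition $\ord_p(x)=f$ by M\"obius inversion and multiplicative characters, and bound the resulting incomplete sums over the progression $\mathcal{Q}$ by P\'olya--Vinogradov, with the divisor sum producing the $\sigzero(p-1)$ factor. Your sketch in fact supplies more detail than the paper does, and the bookkeeping issues you flag (the size of $\#\mathcal{Q}$, the excluded $x\equiv 0$ term, the constant in P\'olya--Vinogradov) are all harmless, being absorbed by the $\sigzero(p-1)\sqrt{p}$ slack in the stated bound.
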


\begin{proof} The proof is the same as the proof of Equation~(7)
  from~\cite{cobeli-zaharescu} with the order
  equal to $f$ instead of $p-1$.
\end{proof}

In particular, we would expect the elements of order $d$ to be equally
likely to be of any residue class modulo $d$.  This leads us to
predict that:

\begin{prediction}
\begin{enumerate}
\item $\Pr[F_3(p)=0] = 1/3$ and $\Pr[F_3(p)=1] = 2/3$
\item $\Pr[F_4(p)=0] = 1/2$ and $\Pr[F_4(p)=1] = 1/2$
\item $\Pr[F_6(p)=0] = 5/6$ and $\Pr[F_6(p)=2] = 1/6$
\end{enumerate}
\end{prediction}

This is in fact what we observe in the data, as shown in
Figure~\ref{fig:smallorder6}.  This figure shows the number of primes
such that $d \mid (p-1)$ for $d=3, 4,$ and $6$, the number of primes
for each $d$ with $F_d(p)=0, 1$, and $2$, and the $p$-value given by a
chi-squared test against the distribution predicted above.  Once
again, we do not see statistical evidence that our predictions are
incorrect.  (Other small orders do not seem to exhibit this lack of
independence in a statistically significant way.  For example, $d=5$
fits the distribution of the original model with $p=0.222$ and $d=7$
fits with $p=0.541$.)




\begin{figure}
 \centering
  \includegraphics[width=\textwidth]{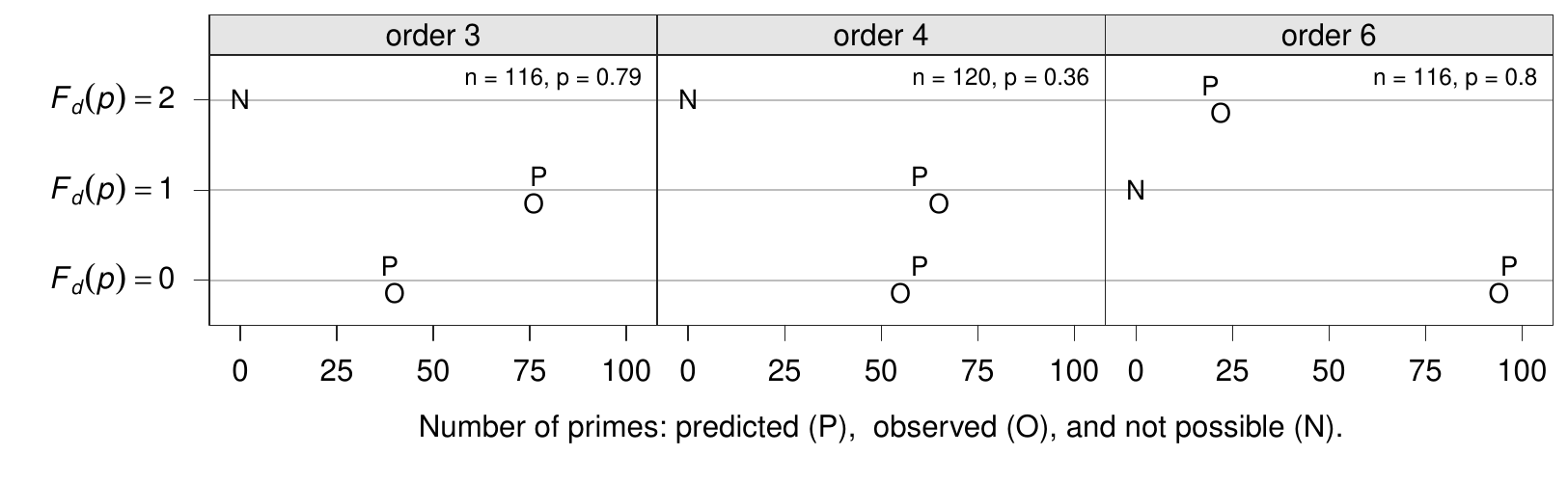}
 \caption{Predictions and observations for fixed points of order 3, 4, and 6 in
     six-digit primes} \label{fig:smallorder6}
\end{figure}

\subsection{Large Orders}

We also observed significant deviation from our predictions in the
case of large orders.  Recall that Part~\ref{Fp-1/2} of
Theorem~\ref{thm:predorders} used Proposition~2 of~\cite{REU2010} to
prove that there was at most one fixed point of order $(p-1)/2$.  In
fact, that Proposition also showed that the fixed point exists if and
only if 2 is a quadratic residue modulo $p$.  Similarly, if $3 \mid
(p-1)$ then Corollary~\ref{prop7cor} shows that there are at most two
fixed points of order $(p-1)/3$, namely $(p+2)/{3}$ and
$(2p+1)/{3}$. Using methods similar to the above we can show that
these residue classes will be fixed points when they are cubic residues modulo
$p$.

\begin{proposition} Let $p$ be a prime number equivalent to $1$ modulo
  $3$. The residue class $(p+2)/3$ is a fixed point if and only if it is a cubic
  residue modulo $p$, and similarly for  $(2p+1)/3$.
\end{proposition}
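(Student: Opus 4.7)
My plan is to apply Lemma~\ref{prop7} to each of the two candidate residue classes and then translate the resulting order divisibility condition into a cubic residue condition. Recall that since $p \equiv 1 \pmod{3}$, an element $y \not\equiv 0 \pmod{p}$ is a cubic residue modulo $p$ if and only if $y^{(p-1)/3} \equiv 1 \pmod{p}$, equivalently $\ord_p y \mid (p-1)/3$.

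First, for $x = (p+2)/3$ I would observe that $x - 1 = (p-1)/3$, so Lemma~\ref{prop7} says $x$ is a fixed point if and only if $\ord_p x \mid (p-1)/3$. By the above characterization, this is precisely the condition that $x$ is a cubic residue modulo $p$.

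Next, for $x = (2p+1)/3$ I would note that $x - 1 = 2(p-1)/3$, so Lemma~\ref{prop7} gives that $x$ is a fixed point if and only if $\ord_p x \mid 2(p-1)/3$. Combining this with the trivially true $\ord_p x \mid p - 1$, the condition becomes $\ord_p x \mid \gcd\bigl(2(p-1)/3,\, p-1\bigr)$. Writing $p - 1 = 3k$, we compute $\gcd(2k, 3k) = k$, so $\gcd\bigl(2(p-1)/3, p-1\bigr) = (p-1)/3$, and once again we recover exactly the cubic residue condition.

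The proof is essentially routine once Lemma~\ref{prop7} is in hand, and the only step requiring attention is the gcd calculation in the second case: one might initially fear that $\ord_p x \mid 2(p-1)/3$ is strictly weaker than being a cubic residue, but the extra factor of $2$ contributes nothing after intersecting with divisibility by $p-1$, because $\gcd(2,3) = 1$ forces the $3$-part of $\ord_p x$ to be at most that of $(p-1)/3$.
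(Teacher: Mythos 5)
Your proof is correct and follows essentially the same route as the paper: both rest on the equivalence ``$x$ is a fixed point iff $\ord_p x \mid x-1$'' (the paper phrases this as $x^{x-1}\equiv 1 \pmod p$ plus Euler's criterion, you cite Lemma~\ref{prop7}), and both must then dispose of the extra factor of $2$ in the case $x=(2p+1)/3$. Your $\gcd\bigl(2(p-1)/3,\,p-1\bigr)=(p-1)/3$ computation is a clean, order-theoretic restatement of the paper's exponent manipulation via Fermat's Little Theorem, and it is valid.
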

\begin{proof}
Note that since $1 \leq x \leq p-1$, Equation~\eqref{eq:spfp} is
equivalent to 
\begin{equation} \label{eq:spfpvar} 
x^{x-1} \equiv 1 \pmod{p}
\end{equation}
Then $(p+2)/3$ is a fixed point if and only
$$\left(\frac{p+2}{3}\right)^{\frac{p-1}{3}} \equiv 1 \pmod{p},$$
which by Euler's Criterion is equivalent to $(p+2)/3$ being a cubic
residue.

Similarly, if $(2p+1)/3$ is a fixed point then 
$$\left(\frac{2p+1}{3}\right)^{\frac{2p-2}{3}} \equiv 1 \pmod{p}.$$
But then 
$$\left(\frac{2p+1}{3}\right)^{\frac{p-1}{3}} \equiv
\left(\frac{2p+1}{3}\right)^{\frac{4p-4}{3}} \equiv 1 \pmod{p}$$ also,
where the first equivalence is just Fermat's Little Theorem.  So
Euler's Criterion is satisfied again.  Conversely, if 
$$\left(\frac{2p+1}{3}\right)^{\frac{p-1}{3}} \equiv 1 \pmod{p}$$
then certainly 
$$\left(\frac{2p+1}{3}\right)^{\frac{2p-2}{3}} \equiv 1 \pmod{p}$$
so $(2p+1)/3$ is a fixed point.

\end{proof}

More simplifications show that $(2p+1)/{3}
\equiv 3^{-1} \pmod{p}$ and $(p+2)/{3} \equiv 2(3^{-1}) \pmod{p}$ so
$(2p+1)/{3}$ will be a cubic residue whenever $3$ is a cubic
residue and both $(p+2)/{3}$ and $(2p+1)/{3}$ will be
cubic residues when both $2$ and $3$ are cubic residues. These same methods
can be used to show that all numbers of the form $(m({p-1})/{k})+1$
where $1 \leq m < k$ will be fixed points in the self-power map when
the number is a $k$-th residue. 

This is not quite enough to investigate $F_{(p-1)/3}(p)$ since not all
cubic residues have order equal to $(p-1)/3$.  We thus estimate the
probability that a given element of $\set{(p+2)/{3}, (2p+1)/3}$, has order
equal to exactly $(p-1)/3$.  Lemma~\ref{cz-eq7} suggests that elements
of order $d$ occur in $\mathcal{P}$ in approximately the same proportion
that they occur in the whole range $1 \leq x \leq p-1$, namely
$\phi(d)/(p-1)$.  (A more precise statement on the frequency of $p$ such
that $kd+1$ has order $d$ would appear to require some variation on
Artin's primitive root conjecture.)

We again use a binomial distribution to predict:

\begin{prediction}
\begin{enumerate}
\item $\Pr[F_{(p-1)/3}(p)=0] = \left( 1 - \dfrac{\phi((p-1)/3)}{p-1}\right)^2$
\item $\Pr[F_{(p-1)/3}(p)=1] = 2
  \left(\dfrac{\phi((p-1)/3)}{p-1}\right)\left( 1 -
    \dfrac{\phi((p-1)/3)}{p-1}\right)$ 
\item $\Pr[F_{(p-1)/3}(p)=2] = \left(\dfrac{\phi((p-1)/3)}{p-1}\right)^2$
\end{enumerate}
\end{prediction}


If $4 \mid (p-1)$, Corollary~\ref{prop7cor} shows that there
are at most three fixed points of order $(p-1)/4$, namely $(p+3)/{4}$,
$(p+1)/2$, and $(3p+1)/{4}$.  However, it turns out that they cannot
all be fixed points at the same time.

\begin{theorem}  Let $p$ be a prime number equivalent to $1$ modulo $4$.
\begin{enumerate}
\item If $p \equiv 1 \pmod{8}$, then $F_{(p-1)/4}(p) \leq 2$.
\item If $p \equiv 5 \pmod{8}$, then $F_{(p-1)/4}(p) \leq 1$.
\end{enumerate}
\end{theorem}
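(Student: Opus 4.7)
My plan is to observe, via Corollary~\ref{prop7cor}, that any fixed point of order $(p-1)/4$ must be one of the three residues $(p+3)/4$, $(p+1)/2$, $(3p+1)/4$, which reduce modulo $p$ to $3\cdot 4^{-1}$, $2^{-1}$, and $4^{-1}$ respectively. The whole proof is then driven by the identity
\[
(3p+1)/4 \;\equiv\; \bigl((p+1)/2\bigr)^{2} \pmod{p}
\]
(both sides being $4^{-1}$), combined with the techniques already used in Theorem~\ref{thm:predorders} and in the preceding proposition. Write $m := (p-1)/4$ for brevity.

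For the case $p \equiv 1 \pmod{8}$, I argue by contradiction that all three candidates cannot simultaneously have order exactly $m$. Since $8 \mid p-1$, the integer $m$ is even, so squaring an element of order $m$ produces an element of order $m/\gcd(2,m) = m/2$. Applying this to $(p+1)/2$ and using the identity above gives $\ord_{p}\bigl((3p+1)/4\bigr) = m/2 \neq m$, contradicting the assumption that $(3p+1)/4$ has order $m$. Hence at most two of the three candidates contribute to $F_{(p-1)/4}(p)$.

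For the case $p \equiv 5 \pmod{8}$, note that $2$ is not a quadratic residue modulo $p$. The argument of Part~\ref{Fp-1/2} of Theorem~\ref{thm:predorders} then shows that $(p+1)/2 \equiv 2^{-1}$ is a fixed point iff $2$ is a QR, which fails. For $(3p+1)/4 \equiv 4^{-1}$, the Euler-criterion technique used in the preceding proposition, combined with $\gcd(3(p-1)/4,\,p-1) = (p-1)/4$, shows that it is a fixed point iff $4$ is a fourth-power residue. But $4 \equiv y^{4} \pmod{p}$ would give $y^{2} \equiv \pm 2$, and since $-1$ is a QR whenever $p \equiv 1 \pmod{4}$, this would force $2$ itself to be a QR, a contradiction. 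So neither $(p+1)/2$ nor $(3p+1)/4$ is a fixed point, leaving only $(p+3)/4$ and yielding $F_{(p-1)/4}(p) \leq 1$.

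The main subtlety to track is the distinction between being a fixed point and being a fixed point of order \emph{exactly} $m$. In the $p \equiv 1 \pmod{8}$ case the order distinction is essential, as both $(p+1)/2$ and $(3p+1)/4$ can individually be fixed points of smaller order without violating anything; the squaring identity only bites once we insist that both have order $m$. In the $p \equiv 5 \pmod{8}$ case, by contrast, two of the three residues fail to be fixed points at all, so no order bookkeeping is needed and the bound of $1$ is immediate.
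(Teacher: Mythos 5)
Your proof is correct. For part (1) it is essentially the paper's argument in different notation: the paper observes that $(p+1)/2 \equiv 2^{-1}$ and $(3p+1)/4 \equiv 4^{-1}$ can only both be fixed points of order $(p-1)/4$ if $\ord_p 2 = \ord_p 4 = (p-1)/4$, and that $8 \mid (p-1)$ forces $\ord_p 4 = (p-1)/8$ in that case; your squaring identity $(3p+1)/4 \equiv \bigl((p+1)/2\bigr)^2$ is exactly the statement $\ord_p 4 = \ord_p 2/2$ when $\ord_p 2$ is even. For part (2) you take a genuinely different route. The paper stays with orders: since $2$ is a non-residue for $p \equiv 5 \pmod 8$, $\ord_p 2 \nmid (p-1)/2$, so neither $\ord_p 2$ nor $\ord_p 4$ can equal $(p-1)/4$, and the two candidates are excluded because they have the wrong order. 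You instead show via Euler's criterion that $2^{-1}$ is not a fixed point at all (2 is a non-residue) and that $4^{-1}$ is not a fixed point at all (4 is not a quartic residue, since $y^4 \equiv 4$ would force $y^2 \equiv \pm 2$ and hence $2$ a quadratic residue, using that $-1$ is a residue for $p \equiv 1 \pmod 4$). Both versions rest on the same engine --- the quadratic character of $2$ modulo $8$ --- but yours buys the slightly stronger conclusion that those two residue classes are not fixed points of \emph{any} order when $p \equiv 5 \pmod 8$, at the cost of the extra $\gcd(3(p-1)/4,\,p-1) = (p-1)/4$ computation, whereas the paper's order argument is shorter and parallels part (1) more closely.
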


\begin{proof}
  Suppose $p \equiv 1 \pmod{8}$.  Since $(p+1)/2 \equiv 2^{-1}
  \pmod{p}$ and $(3p+1)/4 \equiv 4^{-1} \pmod{p}$, these two can only
  be both fixed points of order $(p-1)/4$ if $\ord_p 2 = \ord_p 4 = (p-1)/4$.
  But we know $8 \mid (p-1)$, so if $\ord_p 2 = (p-1)/4$ then $\ord_p
  4 = (p-1)/8$.  On the other hand, if $p \equiv 5 \pmod{8}$, then we
  know $\ord_p 2 \nmid (p-1)/2$ so neither $\ord_p 2$ nor $\ord_p 4$
  can be $(p-1)/4$.
\end{proof}

To make predictions on the probabilities of each number of fixed
points, we again use a binomial distribution.  If $p \equiv 1$ modulo
$8$, we keep in mind that the orders of $(p+1)/2$ and $(3p+1)/4$ are
dependent and never equal so we can treat them together:

\begin{prediction}
\begin{enumerate}
\item $\Pr[F_{(p-1)/4}(p)=0] = 
\left( 1 -
    \dfrac{\phi((p-1)/4)}{p-1}\right)
\left( 1 -
    \dfrac{3\, \phi((p-1)/4)}{(p-1)/2}\right)$
\item $\Pr[F_{(p-1)/4}(p)=1] =$ \\
$\left(\dfrac{\phi((p-1)/4)}{p-1}\right)
\left( 1 - \dfrac{3\,
      \phi((p-1)/4)}{(p-1)/2}\right)
+
 \left( 1 -
    \dfrac{\phi((p-1)/4)}{p-1}\right) 
\left(\dfrac{3\,
      \phi((p-1)/4)}{(p-1)/2}\right)$ 

\item $\Pr[F_{(p-1)/4}(p)=2] = 
\left(    \dfrac{\phi((p-1)/4)}{p-1}\right)
\left(    \dfrac{3\, \phi((p-1)/4)}{(p-1)/2}\right)$
\end{enumerate}
\end{prediction}

If 
$p \equiv 5$ modulo $8$, then we simply have:

\begin{prediction}
\begin{enumerate}
\item $\Pr[F_{(p-1)/4}(p)=0] = 
\left( 1 - \dfrac{\phi((p-1)/4)}{p-1}\right)$
\item $\Pr[F_{(p-1)/4}(p)=1] = 
\left(    \dfrac{\phi((p-1)/4)}{p-1}\right)$
\end{enumerate}
\end{prediction}

Chi-squared tests on the observed data from six-digit primes against
the distributions predicted for orders $(p-1)/3$ and $(p-1)/4$ do not
show significant deviation, as shown in Figure~\ref{fig:largeorder4}.




\begin{figure}
 \centering
  \includegraphics[width=\textwidth]{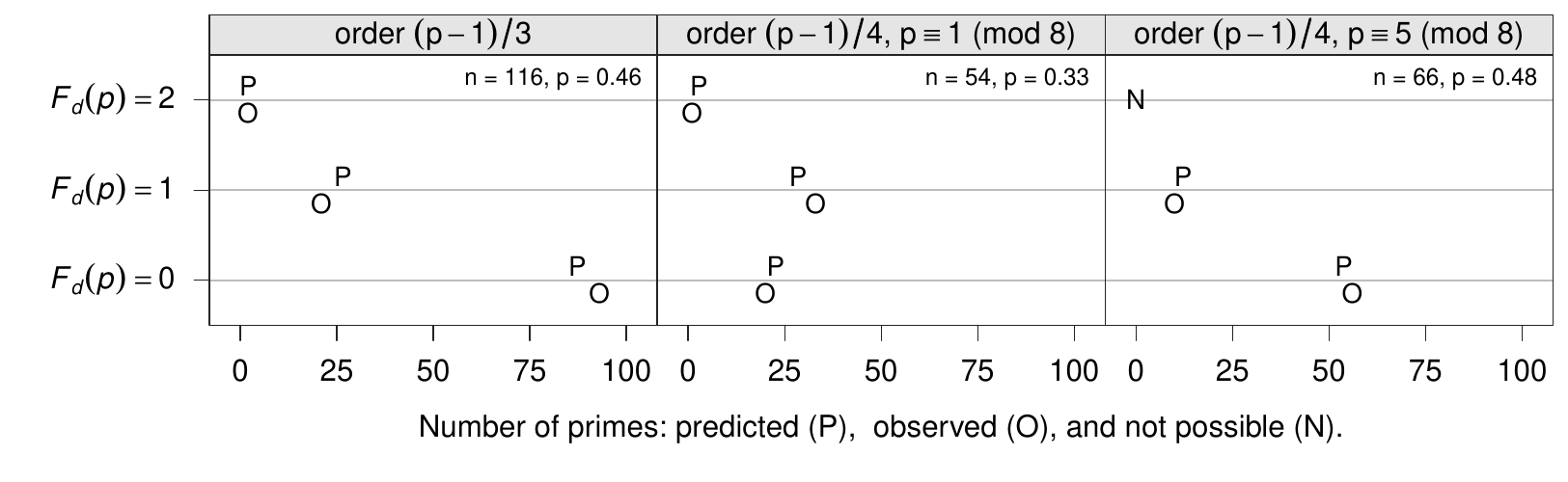}
\caption{Predictions and observations for fixed points of order
   $(p-1)/3$ and $(p-1)/4$ in six-digit primes} 
\label{fig:largeorder4}
\end{figure}

\section{Conclusion and Future Work}

In practice, it would certainly be possible for a user of the variant ElGamal
digital signature scheme to simply make sure $p$ is a safe prime, or
alternatively arrange for $r$ to always be a primitive root.  In this
way one could avoid the issue of fixed points altogether.  However, we
feel that it is very likely that a better understanding of the
self-power map will help us better understand the security of this and
other similar schemes.

We have given some bounds on the number of fixed points of the
self-power map and attempted to predict the distribution of the fixed
points using a binomial model whose mean is related to these proven
bounds.  When the order of $x$ is moderate, this binomial model
is a good predictor according to the data we collected.  When the
order of $x$ is small, in particular when it is 3, 4, or 6, the
independence assumption of the binomial model is violated in a
significant way.  However, we were able to find another model which
appears to successfully predict the distribution.

When the order of $x$ is $(p-1)/3$ or $(p-1)/4$, we once again have a
significant deviation from our first binomial model.  However, a
closer look at the set of possible fixed points in each case leads to
another binomial model which appears to be successful.  Orders in the
range $(p-1)/5$ to $(p-1)/13$ do not appear to be showing significant
deviation from the original model.  However, the sliding window
chi-squared test shows evidence of possible divergence from the
predictions in the neighborhood of $(p-1)/16$, as can be seen in
Figure~\ref{fig:chisq6-sliding} about three-quarters of the
way from the ``Order 1000'' horizontal line to the ``Order 10000''
line.  It is not clear yet whether this is a true problem with the
model, or just a ``random'' consequence of the particular primes that
we picked.  Further investigation of these orders would appear to be
the first item to be considered in future work.

Another very important item of future work would be to consider
two-cycles, namely solutions to the equations 
\begin{equation} \label{eqn:sptc}
h^h \equiv a \pmod{p} \quad \text{and} \quad a^a \equiv h \pmod{p},
\end{equation}
or more generally $k$-cycles.  Some data has been collected for these
larger cycles but the binomial distribution has not yet been
calculated or checked.  The paper~\cite{REU2010} also examined other
graph-theoretic statistics of the functional graphs created by the
self-power map, especially the number of components.  This was also
found to obey a non-normal distribution and one would like to explore
how that distribution is related to the one found here for fixed
points. 



\subsection*{Acknowledgements}
Many thanks to Richard Layton for the design and production of
Figures~\ref{fig:smallorder6} and~\ref{fig:largeorder4}.  We also
thank the Rose-Hulman Computer Science-Software Engineering and Mathematics
Departments for the use of their computers.

\begin{bibdiv}
\begin{biblist}

\bib{anghel}{thesis}{ 
  title={The Self Power Map and its Image Modulo
      a Prime},
    url={https://tspace.library.utoronto.ca/handle/1807/35765},
    type={PhD Thesis}, 
    school={University of Toronto}, 
    author={Anghel, Catalina Voichita}, 
    year={2013},
}

\bib{balog_et_al}{article}{
	title = {On the Number of Solutions of Exponential Congruences},
	author = {Balog, Antal},
        author = {Kevin A. Broughan},
        author = {Igor E. Shparlinski},
	volume = {148},
	url = {http://journals.impan.gov.pl/aa/Inf/148-1-7.html},
	doi = {10.4064/aa148-1-7},
	number = {1},
	journal = {Acta Arithmetica},
	year = {2011},
	pages = {93--103}
}


\bib{cloutier_holden}{article}{
	title = {Mapping the discrete logarithm},
	volume = {3},
	issn = {1944-4176},
	url = {http://msp.org/involve/2010/3-2/p06.xhtml},
	doi = {10.2140/involve.2010.3.197},
	number = {2},
	urldate = {2014-02-15},
	journal = {Involve, a Journal of Mathematics},
	author = {Cloutier, Daniel},
        author = {Holden, Joshua},
	year = {2010},
	pages = {197--213},
}

\bib{cobeli-zaharescu}{article}{
      author={Cobeli, Cristian},
      author={Zaharescu, Alexandru},
       title={An Exponential Congruence with Solutions in Primitive Roots},
	date={1999},
	ISSN={0035-3965},
     journal={Rev. Roumaine Math. Pures Appl.},
      volume={44},
      number={1},
       pages={15\ndash 22}
}

\bib{crocker66}{article}{
	title = {On a New Problem in Number Theory},
	volume = {73},
	number = {4},
	journal = {The American Mathematical Monthly},
	author = {Crocker, Roger},
	year = {1966},
	pages = {355--357},
}

\bib{crocker69}{article}{
	title = {On Residues of $n^n$},
	volume = {76},
	number = {9},
	journal = {The American Mathematical Monthly},
	author = {Crocker, Roger},
	year = {1969},
	pages = {1028--1029},
}

\bib{REU2010}{article}{
	title = {Structure and Statistics of the {Self-Power} Map},
	volume = {11},
	number = {2},
	journal = {{Rose-Hulman} Undergraduate Mathematics Journal},
	author = {Friedrichsen, Matthew},
        author = {Larson, Brian},
        author = {McDowell, Emily},
	year = {2010},
}

\bib{hoffman}{article}{
	title = {Statistical investigation of structure in the
          discrete logarithm}, 
	volume = {10},
	url = {http://umbracoprep.rose-hulman.edu/math/MSTR/MSTRpubs/2009/RHIT-MSTR-2009-09.pdf},
	number = {2},
	urldate = {2014-02-15},
	journal = {Rose-Hulman Undergraduate Mathematics Journal},
	author = {Hoffman, Andrew},
	year = {2009},
}

\bib{holden02}{inproceedings}{
      author={Holden, Joshua},
       title={Fixed Points and Two-Cycles of the Discrete Logarithm},
	date={2002},
   booktitle={Algorithmic number theory ({A}{N}{T}{S} 2002)},
      editor={Fieker, Claus},
      editor={Kohel, David~R.},
      series={LNCS},
   publisher={Springer},
       pages={405\ndash 415},
  url={http://link.springer-ny.com/link/service/series/0558/bibs/2369/23690405%
.htm},
}

\bib{holden02a}{misc}{
      author={Holden, Joshua},
       title={Addenda/corrigenda: Fixed Points and Two-Cycles of the Discrete
  Logarithm},
	date={2002},
        eprint = {arXiv:math/020802 [math.NT]},
	note={Unpublished, available at
          \url{http://xxx.lanl.gov/abs/math.NT/0208028}}, 
}

\bib{holden_moree}{article}{
	title = {Some Heuristics and Results for Small Cycles of the
          Discrete Logarithm}, 
	volume = {75},
	issn = {0025-5718},
	number = {253},
	journal = {Mathematics of Computation},
	author = {Joshua Holden},
        author = {Pieter Moree},
	year = {2006},
	pages = {419--449}
}

\bib{holden_robinson}{article}{
	title = {Counting Fixed Points, {Two-Cycles}, and Collisions
          of the Discrete Exponential Function using $p$-adic
          Methods}, 
	journal = {Journal of the Australian Mathematical Society},
	author = {Holden, Joshua},
        author = {Robinson, Margaret M.},
        volume = {92},
        issue = {2},
        pages = {163--178},
        year = {2012}
}

\bib{kurlberg_et_al}{article}{
    author = {Kurlberg, P\"ar},
    author = {Luca, Florian},
    author = {Shparlinski, Igor E.},
    title = {On the Fixed Points of the Map $x \mapsto x^x$ Modulo a
      Prime},
    journal = {Mathematical Research Letters}
    volume =  {22},
    issue = {1},
    year = {2015},
    pages = {141--168},
    doi = {10.4310/MRL.2015.v22.n1.a8}
}

\bib{lindle}{thesis}{
	type = {Senior Thesis},
	title = {A Statistical Look at Maps of the Discrete
          Logarithm}, 
	school = {Rose-Hulman Institute of Technology},
	author = {Lindle, Nathaniel W.},
	year = {2008}
}

\bib{handbook}{book}{
	title = {Handbook of Applied Cryptography},
	isbn = {0849385237},
	url = {http://www.cacr.math.uwaterloo.ca/hac/},
	publisher = {{CRC}},
	author = {Alfred J. Menezes},
        author = {Paul C. van Oorschot},
        author = {Scott A. Vanstone},
	year = {1996}
}

\bib{somer}{article}{
  author = {Somer, Lawrence},
  title = {The Residues of $n^n$ Modulo $p$},
  journal = {Fibonacci Quarterly},
  volume = {19},
  number = {2},
  year = {1981},
  pages = {110--117}
}  

\bib{sorenson}{article}{
	title = {Poster Abstracts from the Eighth Algorithmic Number Theory Symposium, {ANTS-8}},
	volume = {42},
	url = {http://www.sigsam.org/bulletin/articles/164/ants.pdf},
	number = {2},
	urldate = {2014-02-15},
	journal = {{ACM} Communications in Computer Algebra},
	collaborator = {Sorenson, Jonathan},
	year = {2008},
	pages = {48--66},
}

\bib{zhang}{article}{
      author={Zhang, Wen~Peng},
       title={On a Problem of {B}rizolis},
	date={1995},
	ISSN={1008-5513},
     journal={Pure Appl. Math.},
      volume={11},
      number={suppl.},
       pages={1\ndash 3}
}

\end{biblist}
\end{bibdiv}

\end{document}